\theoremstyle{plain} 
\newtheorem{theorem}{Theorem}
\newtheorem{prop}[theorem]{Proposition}
\newtheorem{corollary}[theorem]{Corollary}
\theoremstyle{definition}
\newtheorem{example}[theorem]{Example}
\newcommand{\LoP}{{\mathrm{LP}}}
\newcommand{\JL}{{\mathrm{ JL}}}
\newcommand{\DL}{{\mathrm{ DL}}}
\newcommand{\DLP}{\DL\oplus_{\JL}\LoP}
\newcommand{\DLJP}{\DL\oplus_{\JL}\JL\oplus_{\JL}\LoP}
\newcommand{\OK}[1]{\mathrm{OK}\left({#1}\right)}
\newcommand{\PROVES}{\vdash}
\newcommand{\MODELS}{\vDash}
\newcommand{\AND}{\wedge}
\newcommand{\OR}{\vee}
\newcommand{\NOT}{\lnot}
\newcommand{\IMP}{\rightarrow}    
\newcommand{\FALSE}{\bot}
\newcommand{\BANG}{\oc}  
\newcommand{\WITH}{\with}  
\newcommand{\JUST}[2]{{#1}:{#2}}  
\newcommand{\DCOLON}{\mathrel{::}}
\title{Denial Logic}
\author[F. Lengyel]{$^\dagger$Florian Lengyel}
\address{$^\dagger$CUNY Environmental CrossRoads Initiative\\
The City College of New York, CUNY\\
160 Convent Avenue\\ 
New York 10031}
\address{$^\dagger$CUNY Graduate Center\\
365 Fifth Avenue\\
New York 10016}
\author[Ben St-Pierre]{$^\ddagger$Benoit St-Pierre}
\address{$^\ddagger$Soeurs de la Charit\'e de Montr\'eal}
\email{$^\dagger$flengyel@ccny.cuny.edu, flengyel@gc.cuny.edu}
\email{$^\ddagger$ben@oueb.ca}
\date{March 16, 2012}
\begin{document}

\begin{abstract}
Denial Logic is the logic of an agent whose justified beliefs are
false, who cannot avow his own propositional attitudes or believe
tautologies, but who can believe contradictions.  
Denial Logic $\DL$ is defined as  justification logic $\JL$ 
together with the Denial axiom $\JUST t E\IMP \NOT E$ and the 
Evidence Pairing axiom $\JUST s D\AND\JUST t E\IMP\JUST{[s\WITH t]}{D\AND E}$. 
Using Artemov's  natural  $\JL$ semantics,  in which justifications are 
interpreted  as sets of formulas, we provide an inductive construction of 
models of $\DL$, and show that $\DL$ is sound and complete.   
Some notions developed for $\JL$, such as constant specifications 
and internalization, are inconsistent with  $\DL$.  
In contrast, we define  negative  constant specifications, 
which can be used to model agents in $\DL$ with justified false beliefs. 
Denial logic can therefore be relevant to philosophical skepticism. 
We define coherent negative constant specifications for $\DL$ to
model a Putnamian brain in a vat with the justified false belief that 
it is not a brain in a vat,  and prove a ``Blue Pill'' theorem, which 
produces a  model of $\JL$ in which ``I am a brain in a vat'' is false.   
We extend $\DL$ to the  multi-modal logic $\DLP$ to model  envatted brains 
who can justify and check  tautologies and avow their own propositional 
attitudes.  Denial Logic was inspired by online debates over anthropogenic 
global warming.    
\medskip

\noindent
{\bf Keywords}   justification logic; logic of proofs; modal and epistemic logic; skepticism.
\end{abstract}
\maketitle

\section*{Introduction}
This paper is a contribution to the study of logics of skepticism. Our setting is 
Justification Logic $\JL$, the minimal logic for a family of logics that includes  the 
Logic of Proofs $\LoP$, a Hilbert-style logical system  extending classical  and 
intuitionistic propositional logic, with additional  propositional types of the form 
$\left(\JUST{t}{A}\right)$, read as ``term $t$ is justification  for $A$'' \cite{MR1836474,MR1996579}. 
 $\LoP$ and the related broader class of justification logics are, in a precise sense, refinements 
of  modal logics, including  \textrm{K}, \textrm{K4}, \textrm{K45}, \textrm{KD45}, 
\textrm{T}, \textrm{S4} and \textrm{S5}   \cite{MR1836474,RSLDec2008,MR1996579,Art2008,Art2012}. 
Semantics for these systems include Kripke--Fitting models, Mkrtychev models, and arithmetical 
provability semantics \cite{MR2102853, MR1611428, MR1836474}.  Recently, Artemov provided 
Justification Logic  with a natural semantics in which justifications are interpreted as sets 
of formulas \cite{Art2012}. In addition to its applications 
in epistemic logic, modal logic and proof theory, $\LoP$ has been 
generalized to interactive multi-agent computation \cite{Kramer2012}.

Studies  of justification logics have tended to focus on justified true belief, provability and 
logical omniscience \cite{MF2009,MR1836474,SARK2009}. Some notions defined for justification 
logics, such as axiomatically appropriate constant specifications, factivity and logical 
introspection, reflect this focus \cite{SATR2010,Art2008,RSLDec2008}. Here we study a  logic 
of justified false belief, Denial Logic $\DL$, defined as $\JL$ together  with the Denial axiom  
$\JUST t E\IMP \NOT E$ and the Evidence Pairing axiom 
$\JUST s D\AND\JUST t E\IMP\JUST{[s\WITH t]}{D\AND E}$.  $\DL$  is 
the logic of an agent whose justified beliefs are
false, who cannot avow his own propositional attitudes or believe
tautologies, but who can believe contradictions.

This paper is organized as follows. The first section defines the syntax of 
Denial Logic and shows that $\DL$ cannot justify any of its axioms. We 
observe that  notion of constant specification 
for $\JL$ can lead to inconsistency in $\DL$. Accordingly, we define the notion of negative constant 
specification. 

The second section extends Artemov's natural semantics for $\JL$ to $\DL$. We define the notion of 
model for $\DL$, prove completeness, and  give an inductive construction 
of models of $\DL$.  We apply  the inductive 
construction of models of $\DL$ to produce models satisfying  negative constant specifications and 
to prove that  the Evidence Pairing axiom is independent of the other axioms of $\DL$.

The third section applies  $\DL$ to Putnam's brains in 
vats where we prove a ``Blue Pill'' theorem \cite{RBlackford2004}. 
Here we use justification logic 
to define a formal notion of coherence suggested by philosophical coherence 
theories of truth \cite{Davidson86}.  

In the fourth section, we note that  a $\DL$ agent cannot believe tautologies 
of propositional logic or avow any of his propositional attitudes.  To handle 
agents that can,  we extend $\DL$ to $\DLP$, the algebraic fibring of $\DL$ 
with the Logic of Proofs $\LoP$ constrained by justification logic $\JL$.

In the fifth section we apply $\DLP$ to an agent who denies that climate 
models indicate anything true, but who allows that $CO_2$ is a greenhouse gas. 
The Denial axiom of the $\DL$ fragment of $\DLP$ is used to model the agent's 
assertion that every indicator produced by  a climate model is wrong. 
This agent cannot provably justify any any scientific conclusion that might 
follow from the concession that $CO_2$ is a greenhouse gas.

In the last section we observe that the Blue Pill theorem extends to $\DLP$.

\subsection{Related work} 
Epistemic logics have been used and developed for the analysis of philosophical skepticism.
Steiner treats Cartesian skepticism in a normal modal logic and shows that a strong skeptical
argument remains if the KK axiom is dropped \cite{MS1979}. Schotch and Jennings propose an alternative
to possible-world semantics and define Basic Epistemic Logic, a non-normal modal logic developed 
to address problems of logical omniscience and skepticism  \cite{PJ1981}. Schotch   defines the  
proto-epistemic logics, in which the knowledge modality need not distribute over  implication \cite{PKS2000}.  
The application here of justification logic to philosophical skepticism  appears to be new.

\section{Denial Logic}
The language  of $\DL$ is the same as that of justification logic $\JL$, with 
the addition of a binary operation $\WITH$ on justification terms \cite{Art2008}. Symbols of $\DL$ include those occurring in justification terms as follows.
  \begin{itemize}
  \item[] {\it justification constants} $c, c_1,c_2,\ldots$
  \item[] {\it justification variables} $u,v,w,x,y,z, x_1,x_2,\ldots$
  \item[] {\it binary operations} $\cdot$, $+$, $\WITH$
    \item[] {\it punctuation} $[$, $]$ 
  \end{itemize}
The set of justification constants, which may be finite or infinite, is specified through what we will call a negative constant specification (see subsection \ref{SubNCS}).

A {\it justification term} $t$ is an expression of the form
\[
t \DCOLON= c_i\, |\, x_j\,
|\, \left[t+t\right]\,
|\, \left[t\cdot t\right]
|\, \left[t\WITH t\right]
\]
where $c_i$ is a justification constant and $x_j$ is a justification variable. 

Symbols occurring in formulas include justification terms and the following. 
  \begin{itemize}
  \item[] {\it propositional variables} $A,B,C,\ldots,X,Y,Z,A_1,A_2,\ldots$
  \item[] {\it propositional constant}  $\FALSE$
    \item[] {\it unary connective} $\NOT$
  \item[]{\it binary connectives} $\AND$,$\OR$,$\IMP$
  \item[] {\it punctuation} $($,$)$,$:$
  \end{itemize}

A {\it formula} $A$ of a $\DL$  is an expression of the form
\[
A \DCOLON= \FALSE\,  |\, A_i \, |\, \left(A\AND A\right) \,| \left(A\OR A\right) \,|\, \left(A\IMP A\right) \,|\,  \JUST t A
\]
where $t$ is a justification term and where $A_i$ is a propositional variable.  It should be noted 
that in the formula $\JUST t P$, the justification term $t$ justifies specific propositional syntax: 
$\JUST t P$ may hold but  $\JUST t {P \AND P}$  may fail in some model of $\JL$.

The axioms of $\DL$ include axioms of classical logic, the Application and 
Sum axioms of $\JL$, the Denial axiom and the Evidence Pairing axiom:
\begin{enumerate}
\item Application. $\JUST s {\left(P\IMP Q\right)} \IMP (\JUST t P \IMP \JUST {[s\cdot t]} Q)$ 
	\label{itm:AxApp}
\item Sum. $\JUST s P \IMP \JUST {[s+t]} P,\quad
                        \JUST t P \IMP \JUST {[s + t]} P$
	\label{itm:AxSum}
\item Denial. $\JUST t P\IMP \NOT P$ \label{itm:AxDenial}
\item Evidence Pairing. $\left(\JUST s P\AND\JUST t Q\right)
\IMP \JUST {[s\WITH t]}{\left(P\AND Q\right)}$\label{itm:AxPairing}
\end{enumerate}
in which $s$ and $t$ are justification terms. The Evidence Pairing Axiom is redundant in 
justification logics with axiomatically complete constant specifications but not in $\DL$ 
(cf. Theorem \ref{ThmInternal} and the remarks following for definitions). 

The rule of inference of $\DL$ is {\it modus ponens} MP.  
We write $\vdash P$ if $P$ is provable in $\DL$ from the axioms and MP. 

\if 0
For the record, we note that $\DL$ can derive a justified false conjunction from  a conjunction of  justified falsehoods.
\begin{prop}\label{PropPairing}
In $\DL$, if there exist justification terms $t_1\ldots,t_n$
such that 
\[
\PROVES \JUST {t_1}{E_1}\AND\cdots\AND\JUST {t_n}{E_n},
\] 
(with some ordering of parentheses among conjuncts)
then there is a justification term $t$ such that $\PROVES\JUST t {\left(E_1\AND\cdots\AND E_n\right)}$ (with the same ordering of parentheses).
\end{prop}
\fi

Denial Logic cannot justify its own Denial Axiom.

\begin{prop}\label{PropUnjustified}
$\DL + \left\lbrace\JUST s {(\JUST t P \IMP \NOT P)}\right\rbrace$  is inconsistent.
\end{prop}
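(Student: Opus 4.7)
The plan is to exhibit a direct contradiction in just a few steps by using the Denial axiom against itself at the meta-level. The key observation is that the extra axiom $\JUST s (\JUST t P \IMP \NOT P)$ asserts that $s$ is a justification for a formula that is itself an axiom of $\DL$; but the Denial axiom tells us that whatever is justified must be false, so in particular the justified Denial axiom instance must be false, contradicting its provability.

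More concretely, I would proceed as follows. First, note that $\JUST t P \IMP \NOT P$ is provable in $\DL$ since it is simply an instance of the Denial axiom. Second, apply the Denial axiom to the formula $E := (\JUST t P \IMP \NOT P)$ itself, which yields
\[
\JUST s (\JUST t P \IMP \NOT P) \IMP \NOT (\JUST t P \IMP \NOT P).
\]
Third, use the new axiom $\JUST s (\JUST t P \IMP \NOT P)$ together with modus ponens to derive $\NOT(\JUST t P \IMP \NOT P)$. Finally, conjoin with the first step to obtain both $\JUST t P \IMP \NOT P$ and its negation, which gives $\FALSE$.

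There is no real obstacle here: the argument is a two-line application of the Denial axiom schema and modus ponens. The only subtlety worth flagging is the universal/schematic reading of the new axiom, i.e. that the inconsistency arises as soon as even a single instance $\JUST s (\JUST t P \IMP \NOT P)$ is added, since we can then instantiate Denial at $E = (\JUST t P \IMP \NOT P)$ for the same $s, t, P$. This matches the intuition stressed in the paper's introduction that $\DL$ cannot internalize or justify its own axioms, which foreshadows why constant specifications in the usual sense of $\JL$ must be replaced by the negative constant specifications introduced later.
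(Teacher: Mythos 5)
Your proof is correct and follows essentially the same route as the paper: instantiate the Denial axiom at $E = (\JUST t P \IMP \NOT P)$, apply modus ponens with the added axiom to derive $\NOT(\JUST t P \IMP \NOT P)$, and note this contradicts the provable Denial instance $\JUST t P \IMP \NOT P$. If anything, you are slightly more explicit than the paper, which stops after deriving $\NOT(\JUST t P \IMP \NOT P)$ and leaves the final clash with the axiom instance implicit.
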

\begin{proof}
Suppose that for some justification term $s$, $\PROVES  \JUST s {(\JUST t P \IMP \NOT P)}$.
From the Denial Axiom, 
$\PROVES\JUST s {(\JUST t P \IMP \NOT P)}\IMP \NOT \left(\JUST t P \IMP \NOT P\right).$
By MP, 
$\PROVES\NOT \left(\JUST t P \IMP \NOT P\right)$.
\end{proof} 

\subsection{Negative constant specifications}\label{SubNCS}
A {\it constant specification} CS for a justification logic $L$ is a set of formulas
$e_n:\cdots:e_1:A$ in which $e_1,\ldots,e_1$ are justification constants and $A$ is an axiom of $L$, and which is downward closed: if $e_n:\cdots:e_1:A$ is in CS, then so is $e_{n-1}:\cdots:e_1:A$ \cite{Art2008}. In particular, if $e:A$ is in CS, then so is $A$. An argument similar to that of Proposition \ref{PropUnjustified} shows that nonempty constant specifications for $\DL$ are inconsistent unless they contain no formula of the form $\JUST e P$.  

However, $\DL$ can have what we call negative constant specifications. A 
{\it negative constant specification} NCS for $\DL$ is a collection $\mathcal{C}$ 
of formulas  of the form $\JUST {e_1} {\JUST \cdots {\JUST {e_n} P}}$ or the negation 
of such a formula, where the $e_i$ are justification constants, such that $\DL+NCS$ is consistent
and where $\mathcal{C}$ is closed under the following rules.
\begin{itemize}
	\item[]{Rule 1.} If $\JUST e P\in \mathcal{C}$, then $\NOT P\in\mathcal{C}$.
	\item[]{Rule 2.} If $\NOT \JUST e P\in \mathcal{C}$, then $P\in\mathcal{C}$.
\end{itemize}
 
Rule 1 is motivated by the Denial axiom and Rule 2 is consistent with it. 
If $\DL+\lbrace\NOT R\rbrace$ is consistent, 
then $\lbrace \JUST {e_1} R,\,\NOT R\rbrace$ is  a negative constant specification.

The choice of a language $L$ for $\DL$ together with a negative constant specification (which may be empty) determines 
we call a {\it denial logic}.

\section{Natural semantics of $\DL$}
 
We recall Artemov's natural semantics for $\JL$ \cite{Art2012}.  Let $L$ be a justification logic.  Let $\mathbf{2}=\lbrace0,1\rbrace$ denote the set of truth values, let $\mathbf{Var}$ denote the set of propositional variables of $L$,  let $\mathbf{Tm}$ denote the set of justification terms of $L$, and let $\mathbf{Fm}$ denote the set of $L$-formulas. 
For $X,Y\subseteq\mathbf{Fm}$ we define the set $X\cdot Y$ 
(of consequents of implications in $X$ and antecedents in $Y$) by
\[
X\cdot Y=\lbrace Q | \left(P\IMP Q\right)\in X \AND P\in Y\rbrace.
\]
Also, we define
\[
X\WITH Y = \lbrace P\AND Q | P\in X \AND Q\in Y\rbrace. 
\]

Following Artemov, a {\it modular model} of $\JL$ is a pair of maps, both denoted by $*$, 
of types $\mathbf{Var}\rightarrow \mathbf{2}$ and $\mathbf{Tm}\rightarrow \mathbf{2}^\mathbf{Fm}$ 
respectively, which satisfy the following relationsn \cite{Art2012}.
\begin{eqnarray}\label{J1}
s^*\cdot t^*\subseteq \left[s\cdot t\right]^*\\
\label{J2}
s^*\cup t^*\subseteq \left[s+t\right]^*
\end{eqnarray}
We define $\left(\JUST t P\right)^*=1$ iff $P\in t^*$.  The model $*$ is extended homomorphically 
to Boolean connectives in the obvious way.
We write $\vDash P$ for $P^*=1$. 

The  relation \eqref{J1} corresponds to the Application axiom \eqref{itm:AxApp} and 
the relation \eqref{J2} corresponds to the Sum Axiom \eqref{itm:AxSum} of $\JL$ 
\cite{Art2012}.

\begin{prop}
Let $*$ be a modular model of $\JL$ and let $L$ be a denial logic.  The Evidence Pairing axiom  of $L$ is satisfied by  $*$ if the relation \eqref{J3} holds.
\begin{eqnarray}\label{J3}
s^*\WITH t^*\subseteq \left[s\WITH t\right]^*
\end{eqnarray}
\end{prop}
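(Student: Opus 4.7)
The plan is to unwind the semantic definitions on both sides of the Evidence Pairing implication and show that the set-theoretic hypothesis \eqref{J3} translates directly into satisfaction of the axiom. Since the Boolean connectives are interpreted homomorphically, it suffices to verify the implication pointwise: assume the antecedent holds in $*$ and derive the consequent.

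First I would suppose $\left(\JUST s P \AND \JUST t Q\right)^* = 1$, so that both $\left(\JUST s P\right)^* = 1$ and $\left(\JUST t Q\right)^* = 1$. By the clause defining $*$ on justification assertions, this is equivalent to $P \in s^*$ and $Q \in t^*$. Next I would apply the definition of the operation $\WITH$ on subsets of $\mathbf{Fm}$, which gives $P \AND Q \in s^* \WITH t^*$. Then the hypothesis \eqref{J3} yields $P \AND Q \in \left[s \WITH t\right]^*$, and unwinding the definition once more produces $\left(\JUST{\left[s\WITH t\right]}{\left(P\AND Q\right)}\right)^* = 1$, which is the consequent of the axiom. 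Hence the implication is satisfied under $*$.

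There is essentially no obstacle here: the argument is a straight chase through three definitions (the truth clause for $\JUST t P$, the set operation $X \WITH Y$, and the inclusion \eqref{J3}), exactly parallel to the way Artemov's conditions \eqref{J1} and \eqref{J2} were shown to validate Application and Sum. The only point worth flagging is that the correspondence is tight in both directions, so \eqref{J3} is the natural semantic counterpart of the syntactic axiom, which makes this proposition the analog of the corresponding observations for \eqref{J1} and \eqref{J2} cited from \cite{Art2012}.
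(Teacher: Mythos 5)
Your proof is correct and follows exactly the same route as the paper's: assume the antecedent, unwind the truth clause for justification assertions to get $P\in s^*$ and $Q\in t^*$, pass through the definition of $X\WITH Y$, and apply the inclusion \eqref{J3} to conclude. No differences worth noting.
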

\begin{proof}
Suppose that $*$ is
a modular model.  If $\MODELS \JUST s P$ and $\MODELS \JUST t Q$ hold, then
$P\in s^*$ and $Q\in t^*$, which implies that $P\AND Q\in s^*\WITH t^*$. 
By \eqref{J3}, $P\AND Q\in [s\WITH t]^*$,
so that $\MODELS \JUST {[s\WITH t]}{P\AND Q}$. It follows that
$\MODELS \JUST s P \AND \JUST t Q\IMP \JUST {[s\WITH t]}{P\AND Q}$ 
holds if \eqref{J3} holds. 
\end{proof}

The image $t^*$ of a justification term is a set of formulas called a {\it justification set}.
Models of $\DL$ satisfy an additional property:  justification sets of modular models
of $\DL$   contain only false formulas.

\begin{prop}\label{PropDenial}
Let  $*$ be a modular model of $\DL$. For every justification term $t$, 
\begin{equation}\label{J4}
t^*\subseteq *^{-1}(0).
\end{equation}
\end{prop}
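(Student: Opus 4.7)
The plan is to argue directly from the Denial axiom and the defining clause $(\JUST t P)^{*} = 1$ iff $P \in t^{*}$, with no need for induction on the structure of $t$.

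First, I fix an arbitrary justification term $t$ and an arbitrary $P \in t^{*}$; the goal is to show $P^{*} = 0$. By the definition of the natural semantics, the hypothesis $P \in t^{*}$ is precisely $\MODELS \JUST t P$. Because $*$ is a modular model of all of $\DL$, every axiom of $\DL$ is valid under $*$; in particular, the Denial axiom gives $\MODELS \JUST t P \IMP \NOT P$. Since $*$ extends to Boolean connectives by the usual truth-functional clauses, modus ponens at the semantic level forces $\MODELS \NOT P$, i.e., $P^{*} = 0$, i.e., $P \in *^{-1}(0)$. Since $P \in t^{*}$ was arbitrary, the inclusion $t^{*} \subseteq *^{-1}(0)$ follows, and since $t$ was arbitrary the statement holds for every justification term.

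The only point worth flagging is the standing convention that a modular model of $\DL$ is not merely a pair of maps satisfying the structural relations \eqref{J1}, \eqref{J2} (and \eqref{J3} for the Evidence Pairing fragment), but one in which every $\DL$-axiom is valid under $*$. Read in the other direction, \eqref{J4} is a necessary condition for the Denial axiom to hold under $*$, and a short converse argument shows it is also sufficient; so I expect \eqref{J4} to function downstream as the defining extra clause on justification sets in the announced inductive construction of $\DL$-models, and in the subsequent completeness proof. There is no real obstacle in the proof of the present proposition itself.
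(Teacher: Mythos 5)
Your argument is correct and is essentially the paper's own proof read in one direction: the paper unwinds $\vDash \JUST t P\IMP\NOT P$ through the truth-functional clause for $\IMP$ and the defining clause $(\JUST t P)^*=1$ iff $P\in t^*$ as a chain of biconditionals, establishing both that \eqref{J4} is necessary (your main argument) and sufficient (your flagged converse) for the Denial axiom to hold. Your observation that \eqref{J4} then serves as the defining extra clause on justification sets for modular models of $\DL$ matches exactly how the paper uses the proposition immediately afterward.
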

\begin{proof}
$\vDash \JUST t P\IMP \NOT P$ iff ${\JUST t P}^* = 0$ or $P^*=0$
iff $P\notin t^*$ or $P^*=0$ iff $t^*\subseteq *^{-1}(0)$.
\end{proof}
 
A {\it modular model}  of $\DL$ is a modular model $*$ of $\JL$ that 
satisfies $\eqref{J3}$ and \eqref{J4}.  Given a negative constant 
specification $NCS$  $\DL$,  a model $*$ {\it respects $NCS$} if  all 
formulas of the $NCS$ hold in $*$. This is a translation to $\DL$ of 
the analogous notion for constant specifications defined in  \cite{Art2008,Art2012}.
 
\begin{example}
There are obvious models of $\DL$:  any map $*:\mathbf{Var}\rightarrow\mathbf{2}$ 
can be extended to $\mathbf{Tm}\rightarrow\mathbf{2^{Fm}}$ by setting  $t^*=\emptyset$ 
for any  justification term $t$; from there $*$ is extended
to $\mathbf{Fm}$ in the obvious way. We call such a modular model a
{\it trivial} model. The Denial axiom is  satisfied in all trivial models.  
\end{example}

\subsection{Soundness and completeness of $\DL$}

\begin{theorem}\label{ThmSC}
$\DL+NCS\PROVES \varphi$ if and only if $\vDash\varphi$ holds in every model of $\DL$ 
respecting $NCS$.
\end{theorem}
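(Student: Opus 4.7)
The plan is to prove each direction separately, with soundness by routine induction and completeness by a Henkin-style maximal consistent extension.

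For soundness, I will argue by induction on the length of a $\DL+NCS$ proof that every theorem is valid in every modular model respecting $NCS$. The base cases split into (i) classical propositional axioms, handled by the fact that $*$ extends homomorphically over Boolean connectives; (ii) the Application and Sum axioms, which hold by relations \eqref{J1} and \eqref{J2} via the standard unpacking $(\JUST t P)^* = 1 \iff P \in t^*$; (iii) the Evidence Pairing axiom, already shown to follow from \eqref{J3} in the proposition above; (iv) the Denial axiom, which follows from \eqref{J4} exactly as in Proposition \ref{PropDenial}; and (v) the formulas of $NCS$, which hold by the hypothesis that $*$ respects $NCS$. The inductive step for \emph{modus ponens} is immediate from the homomorphic extension to $\IMP$.

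For completeness, I will prove the contrapositive: if $\DL+NCS \not\vdash \varphi$, I must produce a model of $\DL$ respecting $NCS$ in which $\varphi$ fails. Since $NCS \cup \{\NOT \varphi\}$ is $\DL$-consistent, I extend it (by a standard Lindenbaum construction using the fact that $\DL$ has only one rule, namely MP) to a maximal $\DL$-consistent set $\Gamma$ containing $NCS \cup \{\NOT\varphi\}$. I then define a candidate model $*$ by setting $A_i^* = 1$ iff $A_i \in \Gamma$ for every propositional variable, and
\[
t^* = \{\, P \in \mathbf{Fm} \mid \JUST t P \in \Gamma \,\}
\]
for every justification term $t$.

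The heart of the argument is the Truth Lemma: $\psi^* = 1$ iff $\psi \in \Gamma$, proved by induction on $\psi$. The propositional cases use the usual maximal consistency properties; the case $\psi = \JUST t P$ holds by the definition of $t^*$ together with the stipulation $(\JUST t P)^* = 1 \iff P \in t^*$. I then verify that $*$ is a modular model of $\DL$ by checking each closure condition on justification sets: \eqref{J1} follows because if $(P \IMP Q) \in s^*$ and $P \in t^*$, then $\JUST s {(P\IMP Q)}, \JUST t P \in \Gamma$, so by Application and MP $\JUST{[s\cdot t]}Q \in \Gamma$; the argument for \eqref{J2} uses Sum, and for \eqref{J3} uses Evidence Pairing. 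Finally \eqref{J4} holds because $P \in t^*$ forces $\JUST t P \in \Gamma$, whence by Denial and MP $\NOT P \in \Gamma$, so by the Truth Lemma $P^* = 0$. Since $NCS \subseteq \Gamma$, the Truth Lemma gives that $*$ respects $NCS$, and since $\NOT \varphi \in \Gamma$ we conclude $\varphi^* = 0$, completing the proof.

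The main obstacle is the Truth Lemma case for $\JUST t P$: I must be careful that the definition of $t^*$ is consistent with the semantic clause $(\JUST t P)^* = 1 \iff P \in t^*$, and that this definition simultaneously validates \eqref{J1}, \eqref{J2}, \eqref{J3}, and \eqref{J4}. All four follow uniformly from the axioms being in $\Gamma$ and closure under MP, so no additional saturation beyond maximal consistency is required; in particular the negative constant specification causes no difficulty because, unlike ordinary constant specifications in $\JL$, it never forces a justification set to contain a formula incompatible with \eqref{J4}.
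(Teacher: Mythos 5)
Your proposal is correct and follows essentially the same route as the paper, which simply defers soundness to Proposition \ref{PropDenial} together with Artemov's Theorem 1 and completeness to Artemov's canonical-model (Lindenbaum/maximal-consistent-set) argument \cite{Art2012} -- exactly the induction on proofs and Truth-Lemma construction you spell out, with the only $\DL$-specific additions being the verification of \eqref{J3} and \eqref{J4}, which you handle correctly.
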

\begin{proof}
The soundness  $\DL$ follows from Proposition \ref{PropDenial} and Theorem 1 of \cite{Art2012}. 
The proof of completeness is the same as in  \cite{Art2012}.
\end{proof}

\subsection{Inductive construction of models}\label{Con3}
We give an inductive construction of all modular models of $\DL$,
depending on a valuation of propositional variables and a Boolean valued-functional.

This construction of a nontrivial modular model for {\bf DL} will 
extend a map $*:\mathbf{Var}\rightarrow\mathbf{2}$  simultaneously to $\mathbf{Tm}\rightarrow\mathbf{2^{Fm}}$ and to
$*:\mathbf{Fm}\rightarrow\mathbf{2}$. These models will be parametrized by a Boolean-valued functional of type
 $F:\mathbf{2}^\mathbf{Var}\times\tau^3\rightarrow\mathbf{2}$, where the ordinal $\tau$ will be defined.

Since  $\mathbf{Tm}$ is inductively defined, there exist well orderings
on $\mathbf{Tm}$ so that if $\gamma_0,\ldots,\gamma_n$ are ordinals, $t_{\gamma_i}\in\mathbf{Tm}$ and $t_{\gamma_n}=F(t_{\gamma_0},\ldots,t_{\gamma_{n-1}})$ then $\gamma_0,\ldots,\gamma_{n-1}<\gamma_n$. Fix such a well ordering
on $\mathbf{Tm}$. Likewise, $\mathbf{Fm}$ is constructed inductively from $\mathbf{Var}$ and $\mathbf{Tm}$, so a 
well ordering can be defined on $\mathbf{Fm}$ so that for ordinals $\gamma_0,\ldots,\gamma_{n-1},\beta_0,\ldots\beta_m$, if
$P_{\beta_j}\in\mathbf{Fm}$ and $P_{\beta_m}=G(t_{\gamma_0},\ldots,t_{\gamma_{n-1}};P_{\beta_0},\ldots,P_{\beta_{m-1}})$,
then  for $0\le i<n$ and $0\le j<m$, $\max(\gamma_i,\beta_j)<\beta_m$. There may exist ordinals 
$\beta$ for which $P_\beta$
is undefined; we  assume that a given case below holds only when 
relations occurring in them are defined and hold--this applies to the $t_\gamma\in\mathbf{Tm}$ as well.

The well ordering of $\mathbf{Fm}$ implies that 
$\mathbf{Fm}=\bigcup_{\alpha<\tau}\mathbf{Fm}_\alpha$, 
where $\mathbf{Fm}_\alpha=\lbrace P_\beta:\beta<\alpha\rbrace$,
and  where  $\tau$ is the order type of the  chosen well ordering 
of $\mathbf{Fm}$.   For  $\gamma<\alpha<\tau$ and $t_\gamma\in\mathbf{Tm}$, we will inductively 
define the set $t_{\gamma,\alpha}^*\subseteq\mathbf{Fm}_\alpha$ 
and the satisfaction relation $*_\alpha:\mathbf{Fm}_\alpha\rightarrow\mathbf{2}$.

At the end of the construction we set $t_\gamma^* = \cup_{\alpha<\tau} t_{\gamma,\alpha}^*$ 
and likewise extend the $*_\alpha$ to $\mathbf{Fm}$. 
We proceed by induction on $\alpha<\tau$.

\smallskip
\noindent Suppose $\alpha= 0$. Set all $t_{\gamma,0}^*=\emptyset$ and 
let $*_0$ be the empty function $\mathbf{Fm}_0\rightarrow\mathbf{2}$.

\smallskip\noindent
Suppose $\alpha>0$. The inductive hypothesis  is that $*_\sigma$ is defined on $\mathbf{Fm}_\sigma$ 
for $\sigma<\alpha$ and that $t_{\gamma,\sigma}^*$ is defined for 
$\gamma,\sigma<\alpha$.

If $\alpha$ is a limit ordinal, then set
$t_\gamma^* = \cup_{\beta<\alpha} t_{\gamma,\beta}^*$ and let $*_\alpha$ be the union of the 
$*_\beta$ for $\beta<\alpha$. Otherwise, $\alpha$ is a successor ordinal and 
the remaining cases apply. For Boolean connectives 
we show $\IMP$ only as  this is needed to verify the Application axiom; 
the cases of conjunction $\AND$, disjunction $\OR$ and negation $\NOT$ follow the same 
pattern and are not shown. 

Let $\beta,\gamma<\alpha$.

\smallskip\noindent
Case 1. $P_\beta$ is in $\mathbf{Var}$. 

1a) Define
\[
t^{*,\beta}_{\gamma,\alpha}=
\begin{cases}
\lbrace P_\beta\rbrace,&\mathrm{if}\;P_\beta^*=0\AND F(*,\alpha,\beta,\gamma)=1;\\
\emptyset, &\mathrm{otherwise}.
\end{cases}
\]

1b) Set ${P_\beta}^{*_\alpha}={P_\beta}^*$.

\smallskip\noindent 
Case 2.  $P_\beta=P_\mu\IMP P_\nu$ for $\mu,\nu < \alpha$.  The well ordering on
$\mathbf{Fm}$ ensures that $\mu,\nu <\beta< \alpha$. By the inductive hypothesis, 
the partial satisfaction relation $*_\beta$ is defined on 
$\mathbf{Fm}_{\max(\mu,\nu)}$ and hence on $P_\mu$ and $P_\nu$.

2a)  Define
\[
t^{*,\beta}_{\gamma,\alpha}=
\begin{cases}
\lbrace P_\beta\rbrace,&\mathrm{if}\;P_\mu^{*_\beta}=1\AND P_\nu^{*_\beta}=0
 \AND F(*,\alpha,\beta,\gamma)=1;\\
\emptyset, &\mathrm{otherwise}.
\end{cases}
\]
         
2b) Define
\[
P_\beta^{*_\alpha}=
\begin{cases}
0,&\mathrm{if}\;P_\mu^{*_\beta}=1\AND P_\nu^{*_\beta}=0;\\
1,&\mathrm{otherwise}.
\end{cases}
\]

\smallskip\noindent 
Case 3. $P_\beta=\JUST {t_\gamma} {P_\delta}$ for $\gamma, \delta<\alpha$. 
Again, $\gamma, \delta<\beta<\alpha$ and
by the inductive hypothesis, $P_\delta^{*_\beta}$ is defined.

3a) Define
\[
t^{*,\beta}_{\gamma,\alpha}=
\begin{cases}
\lbrace P_\delta\rbrace,&\mathrm{if}\;P_\delta^{*_\beta}=0
 \AND F(*,\alpha,\beta,\gamma)=1;\\
\emptyset, &\mathrm{otherwise}.
\end{cases}
\]

3b)  Define
\[
P_\beta^{*_\alpha}=
\begin{cases}
1,&\mathrm{if}\; P_\delta^{*_\beta}=0\AND F(*,\alpha,\beta,\gamma)=1;\\
0,&\mathrm{otherwise}.
\end{cases}
\]

\smallskip\noindent 
Case 4. None of the above ($P_\beta$ is undefined). Do nothing.

\smallskip\noindent 
Prologue. At the end of stage $\alpha$, we define for $\gamma<\alpha$,
\[
t^*_{\gamma,\alpha} =
\bigcup_{\beta<\alpha}t^{*,\beta}_{\gamma.\alpha}\cup\bigcup_{\sigma<\alpha}t^*_{\gamma,\sigma}
\]
Before proceeding to stage $\alpha+1$, we may need to update the justification set $t_{\gamma,\alpha}^*$ to 
ensure that the Sum axiom \eqref{itm:AxSum} and the Evidence Pairing
axiom \eqref{itm:AxPairing} continue to hold. For each $\gamma<\alpha$ there are three cases.

\smallskip
Case a) There exist $\mu,\nu<\gamma$ such that 
$t_{\gamma}= t_{\mu}+t_{\nu}$. If $ t_{\mu,\alpha}^*\cup t_{\nu,\alpha}^*\subseteq t_{\gamma,\alpha}^*$
do nothing; otherwise redefine
\[
t_{\gamma,\alpha}^*:= t_{\mu,\alpha}^*\cup t_{\nu,\alpha}^*\cup 
\bigcup_{\beta<\alpha}t^{*,\beta}_{\gamma.\alpha}\cup\bigcup_{\sigma<\alpha}t^*_{\gamma,\sigma}.
\]

\smallskip
Case b) There exist $\mu,\nu<\gamma$ such that $t_{\gamma}= t_{\mu}\WITH t_{\nu}$.
If $t_{\mu,\alpha}^*\WITH t_{\nu,\alpha}^*\subseteq  t_{\gamma,\alpha}^*$ do nothing; otherwise
redefine
\[
t_{\gamma,\alpha}^*:= \left(t_{\mu,\alpha}^*\WITH t_{\nu,\alpha}^*\right)\cup 
\bigcup_{\beta<\alpha}t^{*,\beta}_{\gamma.\alpha}\cup\bigcup_{\sigma<\alpha}t^*_{\gamma,\sigma}.
\]

\smallskip
Case c) None of the above. This includes the possibility that $t_\gamma$ is undefined. Do nothing.

\smallskip\noindent 
(End of construction \ref{Con3}).

\begin{prop}
For each Boolean-valued functional 
$F:\mathbf{2}^\mathbf{Var}\times\tau^3\rightarrow\mathbf{2}$
and for each valuation $*:\mathbf{Var}\rightarrow\mathbf{2}$,
the construction \ref{Con3} yields a  modular model 
of {\bf DL}.
\end{prop}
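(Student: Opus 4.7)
The plan is to verify by transfinite induction on $\alpha < \tau$ that the construction produces well-defined maps $*:\mathbf{Var}\to\mathbf{2}$, $*:\mathbf{Tm}\to\mathbf{2}^{\mathbf{Fm}}$, and $*:\mathbf{Fm}\to\mathbf{2}$, with the map on $\mathbf{Fm}$ extending the given valuation on $\mathbf{Var}$ homomorphically across Boolean connectives and satisfying $(\JUST{t}{P})^* = 1$ iff $P \in t^*$, and then to check that this map satisfies the four defining relations \eqref{J1}, \eqref{J2}, \eqref{J3}, \eqref{J4}. Throughout the induction I would maintain two invariants: the justification sets $t^*_{\gamma,\alpha}$ grow monotonically in $\alpha$, and the Boolean value assigned to a formula at the stage it first receives one does not subsequently change, so that the limits $t^*_\gamma = \bigcup_{\alpha<\tau} t^*_{\gamma,\alpha}$ and $* = \bigcup_{\alpha<\tau} *_\alpha$ are well-defined.

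The homomorphic behavior on Boolean connectives is essentially immediate from Case 2 (and the analogous unlisted cases), since $P_\beta^{*_\alpha}$ in Case 2b depends only on previously-fixed values $P_\mu^{*_\beta}$ and $P_\nu^{*_\beta}$. The key equivalence $(\JUST{t_\gamma}{P_\delta})^* = 1$ iff $P_\delta \in t^*_\gamma$ couples Cases 3a and 3b through a shared guard: one direction is immediate, since $P_\beta^{*_\alpha} = 1$ forces $P_\delta \in t^{*,\beta}_{\gamma,\alpha} \subseteq t^*_\gamma$. The converse requires tracing how $P_\delta$ can enter $t^*_\gamma$---directly through Case 3a at some stage, inherited from a previous stage, or inserted by the Prologue updates for Sum and Pairing---and, in each case, locating a stage at which $\JUST{t_\gamma}{P_\delta}$ is itself evaluated to $1$.

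Next I would verify the four relations defining a modular model of $\DL$. Relation \eqref{J4} (Denial) follows by induction on the construction: Cases 1a, 2a, 3a guard every addition by falsity at the stage of addition; Prologue Case a unions together justification sets already consisting of false formulas; and Prologue Case b inserts conjunctions $P \AND Q$ in which each conjunct is already known false, hence $P \AND Q$ is false. Relations \eqref{J2} (Sum) and \eqref{J3} (Evidence Pairing) are enforced directly by Prologue Cases a and b, respectively. Relation \eqref{J1} (Application) then holds \emph{vacuously}: if $Q \in s^* \cdot t^*$ via some witness $P$, then $(P \IMP Q) \in s^*$, so by \eqref{J4} the implication $P \IMP Q$ is false (forcing $P$ true) while $P \in t^*$ forces $P$ false---a contradiction; hence $s^* \cdot t^* = \emptyset \subseteq [s\cdot t]^*$.

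The main obstacle I expect is verifying that the Prologue updates remain compatible with the coupling in Case 3 between $P_\beta^{*_\alpha}$ and $t^{*,\beta}_{\gamma,\alpha}$. After Prologue Case b inserts a new conjunction $P \AND Q$ into $t^*_{\gamma,\alpha}$, the formula $\JUST{t_\gamma}{(P \AND Q)}$ must receive the value $1$ at some (possibly later) stage for the equivalence to hold in the limit. The interaction between the well-ordering of $\mathbf{Fm}$, the monotonicity invariant, and the functional $F$ is delicate here: once $P \AND Q \in t^*_{\gamma,\alpha}$, the stage at which Case 3 applies to $\JUST{t_\gamma}{(P \AND Q)}$ must yield value $1$, so either $F$ must be understood as implicitly consistent with the already-constructed sets, or the satisfaction clause for $\JUST$ formulas must be reinterpreted to read off membership in $t^*_\gamma$ directly. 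Settling this bookkeeping is the delicate part of the induction.
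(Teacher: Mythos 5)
Your verification of the four modular-model conditions coincides with the paper's own proof: the same vacuous argument for \eqref{J1} (any witness $Q\in s^*\cdot t^*$ would force $P$ to be simultaneously true, because $P\IMP Q$ is false, and false, because $P\in t^*$), the same observation that every clause of the construction only enumerates false formulas into justification sets, giving \eqref{J4} and hence the Denial axiom via Proposition \ref{PropDenial}, and the same appeal to the Prologue for \eqref{J2} and \eqref{J3}. Where you go beyond the paper is in trying to certify that the satisfaction clause $\left(\JUST{t}{P}\right)^*=1$ iff $P\in t^*$ is coherent with the construction, and the obstacle you flag at the end is genuine: the paper's proof is silent on it. In modular semantics the truth value of $\JUST{t}{P}$ is \emph{defined} as membership of $P$ in $t^*$, yet Case 3b assigns that value independently via $F$, and a later Prologue update (Case a or b) can insert $P$ into $t_\gamma^*$ after $\JUST{t_\gamma}{P}$ has already been evaluated to $0$ --- for instance $\JUST{[s+t]}{P}$ may get value $0$ at an early stage while $P$ enters $[s+t]^*$ later through the Sum closure. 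The resolution consistent with the semantics is the second of the two options you name: treat Case 3b not as an independent stipulation but as derived, reading off the truth of justification formulas from the final sets $t_\gamma^*=\bigcup_{\alpha<\tau}t_{\gamma,\alpha}^*$; with that reading, and with the monotonicity invariant you state, the verification closes and the conditions \eqref{J1}--\eqref{J4} go through exactly as in the paper. So your proposal is essentially the paper's argument, plus a legitimate identification of a bookkeeping issue in the construction that the paper's own proof does not address.
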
  
\begin{proof}
We show that conditions \eqref{J1},\,\eqref{J2} and \eqref{J3} are satisfied.  Note that at each stage $\alpha$ only formulas that evaluate to $0$ (false) are enumerated into the image of any justification term. Hence \eqref{J3} is satisfied, and by Proposition \eqref{PropDenial}, the Denial axiom \eqref{itm:AxDenial} holds.

For \eqref{J1}, note that for any $s,t\in\mathbf{Tm}$, $s^*\cdot t^*=\emptyset$. Otherwise there exist $P,Q\in\mathbf{Fm}$ such that
$Q\in s^*\cdot t^*$, 
$P\IMP Q\in s^*$ and $P\in t^*$. But then $(P\IMP Q)^*=0$, which forces $P^*=1$ and $Q^*=0$. But since $P\in t^*$, by construction,
$P^*=0$. This is a contradiction. Therefore \eqref{J1} holds vacuously, 
and the Application axiom \eqref{itm:AxApp} holds.

The Prologue at stage $\alpha$ of Construction \eqref{Con3} ensures that the 
Sum Axiom \eqref{itm:AxSum} and the Evidence Pairing Axiom \eqref{itm:AxPairing} hold.
\end{proof}  

\subsection{Examples of modular models}

\begin{example}
Trivial modular models are obtained by setting $F(*,\alpha,\beta,\gamma)=0$ for all valuations $*:\mathbf{Var}\rightarrow\mathbf{2}$ and for all ordinals $\alpha,\beta,\gamma<\tau$.
\end{example}

\begin{example}
 Setting $F=1$ yields modular models such that $t^*=*^{-1}(0)$ for every justification term $t$. These models are maximal.
\end{example}
 
\begin{example}
This example will illustrate that the inclusion in \eqref{J2} can be strict \cite{Art2012}.  
Define the functional
$F:\mathbf{2}^\mathbf{Var}\times\tau^3\rightarrow\mathbf{2}$ by
\[
F(*,\alpha,\beta,\gamma)=
\begin{cases}
1,&\mathrm{if}\; t_\gamma\;\mathrm{contains}\;\mathrm{a}\;+;\\
0,&\mathrm{otherwise}.
\end{cases}
\]

Let $*$ be a valuation and suppose that the formula $P$ is false in the model determined 
by $F$ and $*$. Then $\nVdash [x+y]:P \rightarrow x:P\OR y:P$.

This syntactical criterion ensures that for any false formula $P$, if the justification
$t$ has sufficient complexity, then $\JUST t P$ is true.
\end{example}

\begin{example}  
Every modular model of {\bf DL} arises through the choice of a pair $(F, v)$ 
consisting of a functional $F$ and a valuation 
$v:\mathbf{Var}\rightarrow\mathbf{2}$.  Given a negative constant specification 
there is a choice of a functional $F$ that realizes it.  For $\alpha$ large enough and for
$P_\beta=\JUST {e_1} {\JUST \cdots {\JUST {e_n} P}}$ in the specification, where $e_1=t_\gamma$, 
set $F(*,\alpha,\beta,\gamma)=1$. If $P_\beta=\NOT\JUST {e_1} {\JUST \cdots {\JUST {e_n} P}}$,
then set  $F(*,\alpha,\beta,\gamma)=0$.
\end{example}

\begin{example}
Let $\DL{\degree}$ be denial logic without the Pairing axiom \ref{itm:AxDenial} and the binary operation $\WITH$ on justification terms. The
completeness theorem holds for this logic. Also, the construction above goes through without mention of the operation $\WITH$ or
the Pairing axiom to yield an inductive construction of models of $\DL\degree$. 
Let $NCS=\lbrace \JUST a A,\NOT A,\JUST b B,\NOT B\rbrace$ and
assume that $\DL\degree+NCS$ is consistent. We may use the construction and the procedure of the previous example
to build a model of  $\DL\degree+NCS$ such that for each justification term $t$, $t^*\subseteq \lbrace A, B\rbrace$.  It follows that there is
no justification term $s$ such that $\MODELS \JUST s {A\AND B}$ (otherwise $A\AND B\in s^*$, contrary to the construction). 
Hence  there is no justification term $s$ such that $\PROVES_{\DL\degree+NCS} \JUST s {A\AND B}$. 
This shows that the Evidence Pairing axiom is independent of the other axioms of $\DL$. 
\end{example}

\section{Philosophical Interpretation of $\DL$}  
Artemov has characterized Justification Logic $\JL$ with an empty constant 
specification as the ``logic of general (not necessarily factive) justifications 
for an absolutely skeptical agent for whom no formula is provably justified'' \cite{Art2008}.   
Denial Logic $\DL$ models an agent whose  justified beliefs are false, who cannot avow his own
propositional attitudes, who is capable  of believing logical contradictions and for whom 
even tautologies of classical logic cannot be justified. The soundness and completeness 
of $\DL$ and the number of its nontrivial modular models suggest that  $\DL$ is suitable 
as a logic of philosophical skepticism that goes beyond  the skeptical challenges of 
Descartes' {\it Meditations} and Putnam's brains in vats \cite{descartes1991,HP1981}. 

In his {\it First Meditation}, Descartes writes that he can still reason about 
arithmetic and geometry even if he is dreaming: ``For whether I am awake or 
asleep, two and three added together are five, and a square has no more than 
four sides. It seems impossible that such transparent truths should incur 
any suspicion of being false'' \cite{descartes1991}. In our applications of $\DL$ below,  our 
agents will stop short of believing contradictions and disavowing their own beliefs.

In {\it Reason, Truth and History}, Hilary Putnam presented 
a modern formulation of the skeptical challenge posed by the 
evil genius of Descartes' {\it Meditations} \cite{HP1981}. 
Putnam's argument that we could not be brains in a vat 
provoked a vigorous philosophical response, e.g. 
\cite{HP1981,JM1984,PS1984,JH1985,ALB1986,MD1991,%
ALB1992,DC1993,FS1993,YS1994,ALB1995,GF1995,HWN1998,SS1999,BW2000,HWN2000,%
BCJ2003,AB2006}. For Putnam, the thesis that we are, 
have always been, and will always be brains in a vat is self-refuting on 
semantic grounds. The argument remains controversial.

\subsection{Brains in vats under $\DL$}\label{SecBiVDL} 
We interpret Putnam's thought experiment so that 
the logic of belief of our brain in a vat concerning its sensory 
experience is denial logic $\DL$.  In this interpretation, the world 
consists of a bio-computing facility  supplying electrical impulses 
to a community of brains in vats with nutrients and appropriate 
cabling to the computing facility.  The programming of the facility 
determines the sense experiences of the envatted brains as its 
simulation runs \cite{HP1981}.  We assume that logic of belief of an envatted brain  
of its sense experience is given by a choice of language for $\DL$ 
together with the choice of a negative constant specfication $CS$ 
of the justified false beliefs the brain holds about its experience.

More explicitly, we assume that for any sense experience the 
computing facility induces within an envatted brain, there are one or more 
formulas $E$ of $\DL$ asserting something that the brain believes about that 
experience (e.g., ``I am not a brain in a vat''). 
Also, we assume that for every such formula $E$ that the envatted brain believes for 
some reason indicated by its experience,  there is a justified formula $\JUST s E$ of $CS$ 
with justification term $s$, such that in every model $*$ of $\DL+CS$, the 
interpretation $s^*$ of  $s$ in the model $*$ contains the formulas representing 
the account the brain would provide of its experience, were the brain to attempt to 
justify its belief (e.g., ``I am walking outside'', ``the air is cool'', 
``I hear the cellphone ringing'' and so on).

Following Putnam's thought experiment, we will assume that the sentence
``I am not a brain in a vat'' is represented in $\DL+CS$ 
as $\JUST s E$ for some justification term $s$. 
In $\DL+CS$, $\NOT E$ holds; that is, the brain is indeed a brain in a vat. 
In general we assume that $\DL+CS$ can formally represent the epistemic state of 
an envatted brain as the computing facility determines the brain's sense experience,
at least to to one order of belief. Beliefs about beliefs will be addressed in the sequel.

Our intention is that $\DL+CS$ formalizes the epistemic situation of the 
envatted brain, as expressed by Ludwig in this passage \cite[p.35--36]{KW1992}.
\begin{quotation}
[...] the brain in the vat is thinking `There's a tree' and means what we 
do by that, and fails to be, in one sense, thinking about a tree simply 
because his assumption that he is in perceptual contact with a tree is false. 
The brain in the vat, then, far from having mostly true beliefs in virtue of 
its not being in causal contact with trees and tables and chairs and the like, 
and so not thinking about such things, has mostly false beliefs precisely
because he fails, when talking about things about him, to be thinking about 
trees and tables and chairs and so on. 
\end{quotation}

Given $\DL+CS$, we use the knowledge extraction operator $\mathrm{OK}$ of
Artemov and Kuznets to  derive a new logical system with an interpretation 
$*$ such that if $\{\JUST s E,\, \NOT E\}\subseteq CS$ 
(e.g., where $\JUST s E$ is ``I am not a brain in a vat for reason $s$''), 
then $E$ will be true in $*$.  
If $L$ is a justification logic and if $CS$ is a  (negative) 
constant specification, we define the knowledge extraction operator $\mathrm{OK}$ as follows
\cite{SARK2009}.
\[
\OK{CS}:=
\lbrace E : \exists t\in\mathbf{Tm},\, \PROVES_{L+CS} \JUST t E\rbrace.
\]

A negative constant specification $CS$ for $\DL$ is {\it coherent} if  
for every justification term $t$ such that $\PROVES_{\DL+CS} \JUST t E$, 
there exists a model $*$ of $\JL$ such that 
$\MODELS E$. By $\JL$ we mean justification logic with the same language 
as $\DL+CS$, but without the specification. 

\begin{prop}\label{PropFinite}
Suppose that $CS$ is a coherent constant specification for $\DL$.   
Then every finite subcollection of $\OK{CS}$ is satisfiable in some  
modular model of $\JL$. 
\end{prop}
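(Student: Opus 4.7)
The plan is to reduce the finite satisfiability claim to the single-formula coherence hypothesis by using the Evidence Pairing axiom to fold finitely many justified formulas into a single justified conjunction.

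First I would unpack definitions. Let $E_1,\dots,E_n$ be a finite subcollection of $\OK{CS}$. By definition of the knowledge extraction operator, for each $i$ there exists a justification term $t_i$ such that $\PROVES_{\DL+CS}\JUST{t_i}{E_i}$. The goal is to produce a single modular model of $\JL$ (with the same language as $\DL+CS$, but without the specification and without the Denial axiom) in which every $E_i$ is true.

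Next I would iterate Evidence Pairing. Starting from $\JUST{t_1}{E_1}$ and $\JUST{t_2}{E_2}$, axiom (4) together with modus ponens yields $\PROVES_{\DL+CS}\JUST{[t_1\WITH t_2]}{E_1\AND E_2}$. Induction on $k\le n$ then produces a term $t=[\cdots[[t_1\WITH t_2]\WITH t_3]\cdots\WITH t_n]$ with $\PROVES_{\DL+CS}\JUST{t}{(E_1\AND\cdots\AND E_n)}$, where we fix some explicit left-associated parenthesization both of the justification term and of the conjunction so that the pairing axiom applies verbatim at each step.

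Finally I would invoke coherence. Because $CS$ is coherent, the existence of a term $t$ with $\PROVES_{\DL+CS}\JUST{t}{(E_1\AND\cdots\AND E_n)}$ supplies a modular model $*$ of $\JL$ in which $E_1\AND\cdots\AND E_n$ holds. Since $*$ extends homomorphically to Boolean connectives, each conjunct $E_i$ is true in $*$, which is exactly the required finite satisfiability.

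I do not expect real obstacles; the only things to be careful about are (i) quoting Evidence Pairing as an axiom of $\DL+CS$, which is immediate, and (ii) noting that coherence is stated for a single formula, so the iterated pairing step is essential — without Evidence Pairing one would only be able to satisfy the $E_i$ one at a time in possibly different $\JL$-models, which is a strictly weaker conclusion.
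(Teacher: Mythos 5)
Your proposal is correct and takes essentially the same route as the paper: both arguments iterate the Evidence Pairing axiom to fold the finitely many justified formulas into a single justified (left-associated) conjunction and then apply the coherence hypothesis to obtain a $\JL$-model of that conjunction. The only difference is presentational --- the paper frames the argument as a proof by contradiction (assuming the conjunction is unsatisfiable), whereas you argue directly, which is if anything slightly cleaner.
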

\begin{proof}
Suppose otherwise. Then there is a finite sequence $\JUST{t_1}{E_1},\ldots,\JUST{t_n}{E_n}$ of justified formulas with  $E_i\in\OK{CS}$ such that  
$\PROVES_{\DL+CS} \JUST {t_i} {E_i}$ for $1\le i\le n$, and such that 
$((\ldots(E_1\AND E_2)\AND\cdots)\AND E_n)$ is unsatisfiable
in $\JL$.    By logic, 
\[
\PROVES_{\DL+CS}((\ldots(\JUST {t_1}{E_1}\AND\JUST {t_2}{E_2})\AND\cdots)\AND\JUST{t_n}{E_n})
\]

It follows from the Evidence Pairing axiom\ref{itm:AxPairing} that  there exists a 
justification term $t = [[\ldots[t_1\WITH t_2]\WITH\cdots]\WITH t_n] $ such that 
\[
\PROVES_{\DL+CS}\JUST t {((\ldots(E_1\AND E_2)\AND\cdots)\AND E_n)}.
\] 
Since $CS$ is coherent, there is a model $*$ of $\JL$  in which  $\MODELS ((\ldots(E_1\AND E_2)\AND\cdots)\AND E_n)$. This is a contradiction.
\end{proof}

\begin{theorem}[Blue Pill Theorem]\label{CorCompact}
$\JL+\OK{CS}$ has a model $*$.
\end{theorem}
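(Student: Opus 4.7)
The plan is a straightforward compactness argument on top of Proposition \ref{PropFinite}. The soundness and completeness of $\JL$ under Artemov's natural semantics (Theorem~1 of \cite{Art2012}) together with the finitary nature of $\JL$-proofs yields a compactness theorem: a set $\Gamma$ of formulas is satisfiable in some modular model of $\JL$ iff every finite subset of $\Gamma$ is. So the body of the argument is really just to reduce the Blue Pill Theorem to compactness applied to $\Gamma = \OK{CS}$.

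First I would record the compactness statement explicitly, either by citing it or by observing that it follows from completeness in the usual way: if $\OK{CS}$ were not satisfiable in any modular model of $\JL$, then by completeness $\JL+\OK{CS}$ would prove $\FALSE$; since any such proof uses only finitely many hypotheses, some finite $\Delta\subseteq\OK{CS}$ would be $\JL$-inconsistent and hence unsatisfiable in every modular $\JL$-model. This directly contradicts Proposition~\ref{PropFinite}, which guarantees that every finite subcollection of $\OK{CS}$ has a modular $\JL$-model.

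Next I would apply compactness to $\OK{CS}$: by Proposition~\ref{PropFinite}, every finite subset of $\OK{CS}$ is satisfiable in a modular model of $\JL$, so by compactness there is a single modular model $*$ of $\JL$ in which every formula of $\OK{CS}$ is true. That model $*$ is, by definition, a model of $\JL+\OK{CS}$, as required.

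The only subtle point, and the place where I would spend care in writing the proof, is justifying the use of compactness for Artemov's natural semantics of $\JL$. Because modular models are defined by inclusion conditions rather than by standard first-order structures, one should verify that the completeness proof in \cite{Art2012} produces models of $\JL+\Gamma$ from any $\JL+\Gamma$-consistent set $\Gamma$ (via a Lindenbaum-style construction), so that compactness is immediate from completeness. Once that is in place, the Blue Pill Theorem follows in one line from Proposition~\ref{PropFinite}.
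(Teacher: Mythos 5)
Your proposal is correct and follows exactly the paper's own route: the paper proves the theorem by combining Proposition~\ref{PropFinite} with compactness of $\JL$, noting that compactness follows from the completeness theorem of \cite{Art2012}. Your write-up merely fills in the standard derivation of compactness from completeness and finitary proofs, which the paper leaves implicit.
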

\begin{proof}
By Proposition \ref{PropFinite} and the compactness of $\JL$. Compactness follows from the
completeness theorem for $\JL$ \cite{Art2012}. 
\end{proof}

Under the assumption that the collection of justified beliefs of an 
envatted brain is coherent, there exists a model $*$ in 
which ``I am not a brain in a vat'' comes  out true, as well as  
all of the other justified false assertions of the negative 
constant specification $CS$.

The $\JL$ model $*$ was derived from Proposition \ref{PropFinite}  under the
assumption that $\DL+CS$ is consistent (which implies that it has a model) 
and that $CS$ is  coherent. The mathematical argument makes no reference
to Putnam's causal theory of reference \cite{HP1981}. It does not offer much assurance that
we are not brains in a vat, however.

Proposition \ref{PropFinite} and  Corollary \ref{CorCompact} hold in more general justification logics that satisfy an evidence pairing property.  In general we have the following.

\begin{corollary}\label{PropFinite2}
Suppose that $CS$ is a coherent constant specification for a justification logic $L$.   
Suppose further that whenever $\PROVES_{L+CS}\JUST r A \AND \JUST s B$ there exists a justification term
$t$ such that $\PROVES_{L+CS} \JUST t {A\AND B}$.
Then every finite subcollection of $\OK{CS}$ is satisfiable in some  
modular model of $\JL$ with the same language as $L$. 
\end{corollary}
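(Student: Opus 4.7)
The plan is to mimic the proof of Proposition \ref{PropFinite} almost verbatim, replacing the single invocation of the Evidence Pairing axiom by a finite induction that uses the hypothesized pairing property for $L+CS$. The coherence assumption then supplies the desired $\JL$-model of the finite conjunction, producing a contradiction.

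More concretely, I would argue by contradiction. Suppose $\{E_1,\ldots,E_n\}\subseteq\OK{CS}$ is a finite subcollection with no modular $\JL$-model. By the definition of $\OK{CS}$, choose justification terms $t_1,\ldots,t_n$ with $\PROVES_{L+CS}\JUST{t_i}{E_i}$ for each $i$. Using classical propositional logic (via MP applied to instances of $A\IMP(B\IMP A\AND B)$, which is available since $L$ extends classical logic), deduce that
\[
\PROVES_{L+CS}\JUST{t_1}{E_1}\AND\cdots\AND\JUST{t_n}{E_n}.
\]

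Now comes the inductive step, which is the only place the extra hypothesis is used. Proceeding by induction on $n$: for $n=1$ there is nothing to do, and for the inductive step apply the assumed pairing property to the conjunction $\JUST{r}{A}\AND\JUST{s}{B}$ obtained from the inductive hypothesis (with $A=E_1\AND\cdots\AND E_{n-1}$, $B=E_n$) to produce a justification term $t$ with $\PROVES_{L+CS}\JUST{t}{(E_1\AND\cdots\AND E_n)}$. This $t$ witnesses that the conjunction lies in $\OK{CS}$. Finally, invoke coherence of $CS$: there is a modular model $*$ of $\JL$ (with the same language as $L$) in which $\MODELS E_1\AND\cdots\AND E_n$, so in particular each $E_i$ is true in $*$, contradicting our hypothesis that the $E_i$ are jointly unsatisfiable.

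The only real subtlety, and hence the main obstacle, is making sure the iterated pairing goes through with the parenthesization and with the provability relation $\PROVES_{L+CS}$ rather than with $\PROVES_{\DL+CS}$, i.e.\ that the hypothesis is stated strongly enough to be chained. Since the assumption is phrased for arbitrary $A,B$ with $\PROVES_{L+CS}\JUST{r}{A}\AND\JUST{s}{B}$, it applies at every step of the induction, so nothing obstructs the chain. The rest of the argument is structurally identical to the proof of Proposition \ref{PropFinite}.
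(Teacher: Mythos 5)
Your proposal is correct and follows exactly the route the paper intends: the corollary is stated without a separate proof precisely because it is the proof of Proposition \ref{PropFinite} with the single appeal to the Evidence Pairing axiom replaced by the hypothesized pairing property for $L+CS$, which is what you do (making the iterated pairing explicit as a finite induction, a detail the paper elides). Your observation that the hypothesis is stated for arbitrary $A$, $B$ and so can be chained is the only point needing care, and you handle it correctly.
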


\begin{corollary}\label{CorCompact2}
$\JL+\OK{CS}$ has a model $*$.
\end{corollary}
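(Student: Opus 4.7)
The plan is to mirror the proof of the original Blue Pill Theorem (Theorem \ref{CorCompact}) verbatim, substituting Corollary \ref{PropFinite2} for Proposition \ref{PropFinite}. The key point is that the only use made of the specific Evidence Pairing axiom in the original argument was to package a finite conjunction of justified formulas into a single justified conjunction; the hypothesis of Corollary \ref{PropFinite2} provides exactly this closure under pairwise evidence combination, and iterating it over a finite list of justified formulas suffices.

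Concretely, I would first invoke Corollary \ref{PropFinite2} to conclude that every finite subcollection $\{E_1,\ldots,E_n\}\subseteq\OK{CS}$ is simultaneously satisfiable in some modular model of $\JL$ sharing the language of $L$. Then I would appeal to the compactness of $\JL$, which follows (exactly as in the proof of Theorem \ref{CorCompact}) from the completeness theorem for $\JL$ in \cite{Art2012}: any set of $\JL$-formulas whose every finite subset has a model has a model. Applying compactness to $\OK{CS}$ yields a modular $\JL$-model $*$ satisfying $\OK{CS}$, which is a model of $\JL+\OK{CS}$.

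The only nontrivial point is verifying that the hypothesis of Corollary \ref{PropFinite2} really does replace the use of the Evidence Pairing axiom in the proof of Proposition \ref{PropFinite}: there, one builds $t=[[\ldots[t_1\WITH t_2]\WITH\cdots]\WITH t_n]$ by iterated application of Evidence Pairing. Under the weaker pairwise hypothesis on $L+CS$, we instead proceed by induction on $n$: given justification terms $t_1,\ldots,t_n$ with $\PROVES_{L+CS}\JUST{t_i}{E_i}$, the base case $n=1$ is immediate, and the inductive step uses the hypothesis applied to the pair $(t',t_n)$, where $t'$ is the term (produced by the inductive hypothesis) justifying $E_1\AND\cdots\AND E_{n-1}$, to produce the desired $t$ justifying $E_1\AND\cdots\AND E_n$. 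This induction is precisely what is already packaged into Corollary \ref{PropFinite2}, so no additional work is required.

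I expect no serious obstacles: the proof is a one-line deduction from Corollary \ref{PropFinite2} and the completeness of $\JL$, and the only conceptual content has already been extracted and stated in the corollary hypothesis.
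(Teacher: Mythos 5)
Your proposal is correct and is essentially the paper's intended argument: the paper leaves Corollary \ref{CorCompact2} without an explicit proof precisely because it is obtained by repeating the proof of Theorem \ref{CorCompact} with Corollary \ref{PropFinite2} in place of Proposition \ref{PropFinite}, followed by compactness of $\JL$ derived from completeness. Your additional remark about iterating the pairwise combination hypothesis is a reasonable gloss on Corollary \ref{PropFinite2} but adds nothing beyond what the paper already assumes.
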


\section{Multi-modal denial logic}\label{SecMulti}
Crispin Wright's discussion of  propositional attitudes in {\it On Putnam's Proof that we are not Brains-in-a-vat} suggests why $\DL$ is not suitable for modeling beliefs about belief \cite[p. 79]{CW1994}.

\begin{quotation}
It is part of the way we ordinarily think about self-consciousness  
that we  regard  the  contents  of  a subject's  contemporary 
propositional attitudes as something which, to use the standard term  
of  art,  they  can  {\it avow}--something  about which 
their judgements are credited with a strong, though defeasible authority  
which does not rest on reasons or evidence. 
\end{quotation}

If knowledge is true justified belief, $\DL$ cannot model agents that 
know anything.  Nevertheless, $\DL$ is one starting point for logical models 
of skeptical thought experiments. To handle agents who can believe theorems 
of classical logic and verify them, and who can avow their own propositional attitudes, we can 
extend  $\DL$  to $\DLP$, the algebraic fibring  of Denial Logic $\DL$ with the  
Logic of Proofs $\LoP$ constrained by justification logic $\JL$.  This is a pushout construction,
first described in \cite{Sernadas1999}.\footnote{More generally, we suggest that the Humean  distinction between ``matters of fact'' and ``relations of ideas'' may be modeled by the algebraic fibring of two justification logics: one appropriate for matters of fact and one appropriate to relations of ideas (e.g., analytic propositions) 
\cite{hume1912enquiry}.}  
\[
\begin{array}{ccc}
\mathrm{JL} & \rightarrow & \mathrm{LP} \\ 
\downarrow & & \downarrow \\ 
\mathrm{DL} & \rightarrow & \mathrm{DL}\oplus_\mathrm{JL}\mathrm{LP}
\end{array}
\]
We indicate the construction in our case, which amounts to the fusion 
of modal propositional Hilbert calculi.

\subsection*{Syntax and semantics of $\DLP$}

The syntax of $\DLP$ amends that of $\DL$ as follows. Justification constants and variables are signed and have the form $c_i^\sigma, x_i^\sigma$ where $\sigma\in\lbrace +,-\rbrace$. The signed justification constant $c_i^+$ (variable $x_i^+$) is   {\it positive}, and the signed justification constant $c_i^-$ (variable $x_i^-$) is   {\it negative}. A {\it positive (negative) justification term} of $\DLP$ is an expression of the form
\[
t^\sigma \DCOLON= c_i^\sigma\, |\, x_j^\sigma\,
|\, \left[t^\sigma+t^\sigma\right]\,
|\, \left[t^\sigma\cdot t^\sigma\right]\, 
|\, \left[t^-\WITH t^-\right]\, 
|\, \BANG t^+
\]
where $\sigma\in\lbrace+,-\rbrace$, $t^+$ is positive, and $t^-$ is negative.  The set $\mathbf{Tm}$ of justification terms is the disjoint union of the set $\mathbf{Tm}^+$ positive justification terms and the set  $\mathbf{Tm}^-$ of negative justification terms.

The definition of a formula of  $\DLP$ amends the definition of a formula of denial logic $\DL$ as follows.
A {\it formula} $A$ of $\DL^\pm$  is an expression of the form
\[
A \DCOLON= \FALSE\,  |\, A_i \, |\, \left(A\AND A\right) \,| \left(A\OR A\right) \,|\, \left(A\IMP A\right) \,|\,  \JUST {t^+} A
\,|\,  \JUST {t^-} A
\]
where $t^+$ is a positive justification term, $t^-$ is a negative justification term, and where $A_i$ is a propositional variable.
The axioms of $\DLP$ are the same as $\DL$ and $\LoP$ with the proviso that  the Denial and Evidence Pairing axioms are restricted to negatively justified formulas, factivity and introspection are restricted to positively justified formulas, and the signs of the terms appearing in the Sum and Application axioms must be the same. Equivalently, the following axioms are those of $\DLP$.
\begin{enumerate}
	\item Application. $\JUST {s^\sigma} {\left(P\IMP Q\right)} \IMP (\JUST {t^\sigma} P \IMP \JUST {[s^\sigma\cdot t^\sigma]} Q)$ 
		\label{itm:AxSgnApp}
	\item Sum.  $\JUST {s^\sigma} P \IMP \JUST {[s^\sigma+t^\sigma]} P,\quad
                        \JUST {t^\sigma} P \IMP \JUST {[s^\sigma + t^\sigma]} P$i
			\label{itm:AxSgnSum}
		\item Denial. $\JUST {s^-} P\IMP \NOT P$\label{itm:AxSgnDenial}
		\item Pairing. $\left(\JUST {s^-} P\AND\JUST {t^-} Q\right) 
\IMP \JUST {[s^-\WITH t^-]}{\left(P\AND Q\right)}$
   \label{itm:AxSgnPairing}
\item Positive Factivity. $\JUST{t^+}P\IMP P$\label{itm:AxPosFactivity}
\item Positive Introspection. $\JUST{t^+}P\IMP\JUST {\BANG t^+} {\JUST {t^+} P}$
	\label{itm:AxPosIntro}
\end{enumerate}
where $\sigma\in\lbrace+,-\rbrace$, $s^-$ is negative, and $t^+$ is positive. 
The rule of inference in $\DLP$ is modus ponens.

\begin{prop}\label{PropDisjoint}
$\DLP\PROVES \NOT \left( \JUST {s^-} P\AND\JUST {t^+} P \right)$, 
where $s^-$ is negative, $t^+$ is positive and where $P$ is a formula.
\end{prop}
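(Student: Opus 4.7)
The plan is to derive a contradiction from the assumption $\JUST{s^-}{P}\AND\JUST{t^+}{P}$ by applying the two factivity-style axioms that are split between the negative and positive fragments of $\DLP$, and then conclude by propositional logic.

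First, from $\JUST{s^-}{P}\AND\JUST{t^+}{P}$ I would extract each conjunct. Applying the Denial axiom \eqref{itm:AxSgnDenial} (which in $\DLP$ is restricted to negative justification terms, and $s^-$ is negative by hypothesis) together with modus ponens to the left conjunct yields $\NOT P$. Applying the Positive Factivity axiom \eqref{itm:AxPosFactivity} (restricted to positive justification terms, and $t^+$ is positive by hypothesis) together with modus ponens to the right conjunct yields $P$. Hence $P\AND\NOT P$ is derivable from the assumption, which classical propositional logic converts into a proof of $\NOT(\JUST{s^-}{P}\AND\JUST{t^+}{P})$ by discharging via the deduction theorem (or, equivalently, by the tautology $(A\IMP P)\AND(A\IMP\NOT P)\IMP\NOT A$ with $A=\JUST{s^-}{P}\AND\JUST{t^+}{P}$).

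There is no real obstacle here: the sign discipline in the syntax of $\DLP$ is precisely what makes the Denial and Positive Factivity axioms simultaneously applicable, each to its own side of the conjunction. The only point that deserves a sentence in the write-up is that the two axioms coexist consistently in $\DLP$ precisely because they are restricted to disjoint syntactic classes ($t^+\in\mathbf{Tm}^+$ versus $s^-\in\mathbf{Tm}^-$), so no single formula is simultaneously subject to both, which is why Proposition \ref{PropUnjustified} does not obstruct consistency of $\DLP$ itself while still forcing the negation in the statement above.
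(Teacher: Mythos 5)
Your proof is correct and is exactly the paper's argument, which the authors state as ``Immediate from Axioms \ref{itm:AxSgnDenial} and \ref{itm:AxPosFactivity}'': Denial applied to the negative conjunct yields $\NOT P$, Positive Factivity applied to the positive conjunct yields $P$, and propositional logic discharges the assumption. Your elaboration, including the remark that the sign discipline keeps the two axioms on disjoint syntactic classes, just makes the paper's one-line proof explicit.
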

\begin{proof}
Immediate from Axioms \ref{itm:AxSgnDenial} and \ref{itm:AxPosFactivity}. 
\end{proof}

Constant specifications for $\DLP$ generalize those of $\JL$ and $\DL$. A {\it constant specification} 
for $\DLP$ is a collection $\mathcal{C}$ of formulas of the form $\JUST {e_1} {\JUST \cdots {\JUST {e_n} P}}$ and negations of such formulas,
where the $e_i$ are justification constants, such that $\left(\DLP\right)+\mathcal{C}$ is consistent 
and where $\mathcal{C}$ is closed under the following rules.

\begin{itemize}
	\item[]{Rule 1.} If $\JUST t P\in \mathcal{C}$ and $t$ is positive, then $P\in\mathcal{C}$.
	\item[]{Rule 2.} If $\JUST s P\in \mathcal{C}$ and $s$ is negative, then $\NOT P\in\mathcal{C}$.
	\item[]{Rule 3.} If $\NOT \JUST t P\in \mathcal{C}$ and $t$ is positive, then $\NOT P\in\mathcal{C}$.
	\item[]{Rule 4.} If $\NOT \JUST s P\in \mathcal{C}$ and $s$ is negative, then $P\in\mathcal{C}$.
\end{itemize}

This definition is motivated  as follows. A set $\mathcal{C}$ consisting of 
formulas only  of the form $\JUST {e_1} {\JUST \cdots {\JUST {e_n} P}}$ with 
all $e_i$ positive is just a constant specification in the usual sense. Rule 1 
is forced by  the Positive Factivity axiom \eqref{itm:AxPosFactivity}, and Rule 3 is consistent with with it. Dually, Rule 2 is forced by signed
denial, and Rule 4 is consistent with it.
  
A {\it modular model}  of $\DLP$ is  a pair of maps, both denoted by $*$, 
of types $\mathbf{Var}\rightarrow \mathbf{2}$ and $\mathbf{Tm}^-\cup\mathbf{Tm}^+=\mathbf{Tm}\rightarrow \mathbf{2}^\mathbf{Fm}$   that satisfy the following conditions.  
\begin{eqnarray}\label{DLP1}
\left(s^\sigma\right)^*\cdot \left(t^\sigma\right)^*\subseteq \left[s^\sigma\cdot t^\sigma\right]^*\\
\label{DLP2}
\left(s^\sigma\right)^*\cup \left(t^\sigma\right)^*\subseteq \left[s^\sigma+t^\sigma\right]^*\\
\label{DLP2a}
\left(s^-\right)^*\WITH \left(t^-\right)^*\subseteq \left[s^-\WITH t^-\right]^*\\
\label{DLP3}
\left(t^-\right)^*\subseteq \left(*|\mathbf{Tm}^-\right)^{-1}(0)\\
\label{DLP4}
\left(t^+\right)^*\subseteq \left(*|\mathbf{Tm}^+\right)^{-1}(1)\\
P\in\left(t^+\right)^*\IMP\JUST{t^+} P\in \left(\BANG t^+\right)^*
\end{eqnarray}
where $s^\sigma,t^\sigma\in\mathbf{Tm}^\sigma$, $\sigma\in\lbrace+,-\rbrace$.

If $CS$ is a constant specification for $\DLP$, then  a model $*$ {\it respects $CS$} if  all formulas of the $CS$ hold in $*$ \cite{Art2008,Art2012}. The analog of  Theorem \ref{ThmSC}, the soundness and completeness theorem,  
holds for $\DLP$.

\begin{theorem} $\DLP$ is complete.\label{ThmDLP}
\end{theorem}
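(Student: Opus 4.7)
The plan is to follow the canonical model construction used in Theorem \ref{ThmSC} and in Artemov's paper \cite{Art2012}, adapted to the two-sort setting of positive and negative justification terms. Soundness comes first: I would verify that each axiom of $\DLP$ is validated by any modular model $*$ satisfying \eqref{DLP1}--\eqref{DLP4} together with the $\BANG$-closure condition for positive introspection. The Application and Sum axioms \ref{itm:AxSgnApp} and \ref{itm:AxSgnSum} follow from \eqref{DLP1} and \eqref{DLP2} respectively, applied separately for each sign $\sigma\in\lbrace+,-\rbrace$. Signed Denial \ref{itm:AxSgnDenial} and signed Pairing \ref{itm:AxSgnPairing} are handled as in the $\DL$ fragment, using \eqref{DLP3} and \eqref{DLP2a}. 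Positive Factivity \ref{itm:AxPosFactivity} follows from \eqref{DLP4}: if $(\JUST{t^+} P)^*=1$ then $P\in (t^+)^* \subseteq (*|\mathbf{Tm}^+)^{-1}(1)$, so $P^*=1$. Positive Introspection \ref{itm:AxPosIntro} is exactly the $\BANG$-closure condition, with modus ponens preserving truth as usual.

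For completeness, suppose $\DLP + CS \not\PROVES \varphi$. Extend $(\DLP+CS)\cup\lbrace\NOT\varphi\rbrace$ to a maximally consistent set $\Gamma$ by a Lindenbaum construction. Define the canonical interpretation by setting, for each propositional variable $X$, $X^*=1$ iff $X\in\Gamma$, and for each justification term $t$ (positive or negative), $t^*=\lbrace P : \JUST t P \in \Gamma\rbrace$. Extend $*$ to $\mathbf{Fm}$ homomorphically over the Boolean connectives and by the rule $(\JUST t P)^*=1$ iff $P\in t^*$. The truth lemma---$\psi^*=1$ iff $\psi\in\Gamma$, proved by induction on the structure of $\psi$---goes through because the $\JUST t P$ clause holds by the definition of $t^*$ and the Boolean clauses hold by maximal consistency of $\Gamma$.

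It then remains to check that this canonical $*$ is a modular model of $\DLP$ respecting $CS$. Conditions \eqref{DLP1}, \eqref{DLP2}, \eqref{DLP2a} follow from Application, Sum, and Pairing by the usual argument: for instance, if $P\IMP Q \in (s^\sigma)^*$ and $P\in (t^\sigma)^*$ then $\JUST {s^\sigma}{(P\IMP Q)}$ and $\JUST {t^\sigma} P$ lie in $\Gamma$, so by Application and maximal consistency $\JUST {[s^\sigma\cdot t^\sigma]} Q \in \Gamma$, giving $Q\in [s^\sigma\cdot t^\sigma]^*$. Condition \eqref{DLP3} follows from signed Denial via the truth lemma: if $P\in(t^-)^*$ then $\JUST {t^-} P\in\Gamma$, hence $\NOT P\in\Gamma$, hence $P^*=0$. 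Condition \eqref{DLP4} follows dually from Positive Factivity. The $\BANG$-closure condition is immediate from Positive Introspection. Respect for $CS$ is automatic, since $CS\subseteq\Gamma$, and the truth lemma applied to $\NOT\varphi\in\Gamma$ produces the countermodel.

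The main obstacle is keeping the sort bookkeeping straight: the induction in the truth lemma must treat both term sorts uniformly, while the conditions characterizing modular models of $\DLP$ distinguish them. The disjoint decomposition $\mathbf{Tm}=\mathbf{Tm}^+\sqcup\mathbf{Tm}^-$ is preserved by $+$ and $\cdot$ (each axiom is sign-homogeneous), whereas only $\mathbf{Tm}^-$ is closed under $\WITH$ and only $\mathbf{Tm}^+$ under $\BANG$, and these closure properties must match the corresponding conditions on $*$. Proposition \ref{PropDisjoint} ensures that no formula is simultaneously positively and negatively justified in $\Gamma$, so the two sorts do not collapse in the canonical interpretation; apart from this sorted bookkeeping, the argument parallels the completeness proof cited for Theorem \ref{ThmSC}.
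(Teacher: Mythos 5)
Your argument is correct, but it takes a genuinely different route from the paper. You prove completeness directly by a Lindenbaum/canonical-model construction: extend $\lbrace\NOT\varphi\rbrace$ (plus the theory) to a maximally consistent $\Gamma$, set $t^*=\lbrace P : \JUST t P\in\Gamma\rbrace$ uniformly for both sorts, and read conditions \eqref{DLP1}--\eqref{DLP4} and the $\BANG$-closure off the corresponding axioms via deductive closure of $\Gamma$; your observation that Proposition \ref{PropDisjoint} is what keeps \eqref{DLP3} and \eqref{DLP4} from clashing in the canonical interpretation is exactly the right sanity check. The paper instead proceeds by reduction: it introduces a fresh propositional variable $X_{\JUST{s^-}\psi}$ for each negatively justified formula, defines a translation $T$ that replaces $\JUST{s^-}E$ by $X_{\JUST{s^-}{T(E)}}$ while leaving positive justifications intact, argues that $\vdash_{\DLP}\psi$ iff $\vdash_{L'}T(\psi)$ for the resulting $\LoP$-style logic $L'$, invokes the known completeness of $\LoP$ to get a countermodel $*'$ for $T(\varphi)$, and pulls $*'$ back to a $\DLP$-model by decoding the fresh variables into the justification sets of negative terms. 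The trade-off: the paper's translation modularizes the proof on top of $\LoP$ completeness and avoids redoing the Henkin construction, but it shifts the real work into verifying faithfulness of $T$ in both directions; your direct construction is self-contained, parallels the completeness proof already cited for Theorem \ref{ThmSC}, and handles constant specifications $CS$ in the same pass rather than as an afterthought. Both are legitimate; yours is arguably the more uniform of the two given how the paper proves completeness for $\DL$ itself.
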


\begin{proof}
Suppose that $\varphi$ is a formula of $\DLP$ such that 
\begin{eqnarray}\label{Unprovable}
	\nvdash_{\DLP} \varphi.
\end{eqnarray}
Let $L$ be a justification logic, and let $\mathbf{Fm}(L)$ denote the set of formulas
of $L$.  Let $V$ be a  set of new propositional variables $X_{\JUST {s^-}\psi}$ for 
each negatively justified formula $\JUST {s^-}\psi$ of $\DLP$. Let $L'$ denote the 
justification logic obtained from the $\LoP$ fragment of $\DLP$ by adjoining the set $V$ of 
new variables and closing under Boolean connectivies and positive justification terms. 
If $\JUST s \psi\in \mathbf{Fm}(L')$, then $s$ is a positive justification term. 

Define by induction a transformation $T:\mathbf{Fm}(\DLP)\rightarrow\mathbf{Fm}(L')$ as follows.
\begin{itemize}
	\item[] $T(E) = E$ if $E$ is a propositional variable;
	\item[] $T(\NOT E) = \NOT T(E)$;
	\item[] $T(D\circ E) = T(D)\circ T(E)$ if $\circ\in\{\AND,\OR,\IMP\}$;		
	\item[] $T(\JUST {s^+} E) = \JUST {s^+} {T(E)}$ for $s^+$ positive;
	\item[] $T(\JUST {s^-} E) = X_{\JUST {s^-}{T(E)}}$ for $s^-$ negative.
\end{itemize}
The logic $L'$ together with the $T$-images of the axioms of $\DLP$
and with Modus Ponens as the rule of inference is a justification logic satisfying
the axioms of the Logic of Proofs. This is because the $T$-images
of classical logic axioms in $\DLP$ are precisely the substitution instances of
those axioms in $\mathbf{Fm}(L')$; and  similarly the $T$-images of
the application, factivity, sum and proof checker axioms of the $\LoP$ fragment
of $\DLP$ are precisely the substitution instances of those axioms in $\mathbf{Fm}(L')$.
Hence for $\psi\in\mathbf{Fm}(\DLP)$, 
$\vdash_{\DLP}\psi \Leftrightarrow \vdash_{L'}T(\psi)$.  
It follows from (\ref{Unprovable}) that $\nvdash T(\varphi)$. 
By the completeness theorem for $LoP$, there is a modular model $*'$ of $L'$
such that $*'\nvDash T(\varphi)$ \cite{KuznetsStuder}.

Define a model $*$ of $\DLP$ from $*'$ as follows. 
If $\psi$ is a propositional variable of $\DLP$, take $*\psi = *'\psi$.
If ${t}^{+}$ is a positive justification term, take 
${(t^{+})}^*={(t^{+})}^{*'}$.
If $s^-$ is a negative justification term, define ${(s^-)}^*$  to be the
set such that for each $\psi\in\mathbf{Fm}(\DLP)$, 
\[
	\psi\in{(s^-)}^*\Leftrightarrow *'\vDash E_{\JUST{s^-} {T(\psi)}}.
\]

By induction, if $\psi$ is a formula of $\DLP$, then
\begin{eqnarray}\label{Transfer}
*\vDash \psi \Leftrightarrow *'\vDash T(\psi).
\end{eqnarray}
For example, $*\vDash \JUST {t^-}\psi \Leftrightarrow 
\psi\in {(s^-)}^* \Leftrightarrow
*'\vDash T(\JUST {t^-} \psi)$.

Moreover, $*$ satisfies conditions (\ref{DLP1}) through (\ref{DLP4}) of subsection \ref{DLP1}.
For example, the $T$-image of the Denial axiom ensures that $*$ satisfies condition (4) above.
If $*\vDash\JUST{s^-}\psi$ holds, then so does 
$*'\vDash E_{\JUST{s^-} {T(\psi)}}$. Since the $T$-image of the Denial axiom
holds, $*'\vDash E_{\JUST{s^-} {T(\psi)}}\IMP \NOT T(\psi)$, and hence
$*'\vDash\NOT T(\psi)$ holds. But by (\ref{Transfer}), $*\vDash \NOT\psi$, which
yields (4). The other cases are similar. Hence $*$ is a model of $\DLP$ in which
$*\nvDash\varphi$.
\end{proof}

\subsection{Internalization in $\DLP$.} 
A constant specification $CS$ for a justification logic $\JL$  
is {\it axiomatically appropriate} if 
for each axiom $A$ of $\JL$, there is a justification constant
$e_1$ such that $\JUST {e_1} A$ is in $CS$, and if $CS$ is downward
closed (cf. subsection \ref{SubNCS}) \cite{Art2008}.
The Logic of Proofs $\LoP$ can internalize its deductions. More generally,
we have the following.
\begin{theorem}[Theorem 1, \cite{Art2008}]\label{ThmInternal}
For each axiomatically appropriate constant specification $CS$,
$\JL+CS$ enjoys the Internalization Property:

\centerline{If $\PROVES F$, then
$\PROVES \JUST p F$ for some
justification term $p$.}
\end{theorem}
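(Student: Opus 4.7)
The plan is a standard induction on the length of a $\JL+CS$-derivation of $F$, exploiting the Application axiom for the one nontrivial inductive step and the axiomatic appropriateness of $CS$ for the base cases. Since the excerpt only records the version of axiomatic appropriateness that handles single axioms, I would first strengthen it (or read it) in the usual form: for every formula $e_n:\cdots:e_1:A \in CS$ (including $n=0$, i.e., every axiom $A$) there is a constant $e_{n+1}$ with $e_{n+1}:e_n:\cdots:e_1:A \in CS$; downward closure then guarantees the intermediate members of the tower are already in $CS$.

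For the base cases, suppose $F$ is an axiom of $\JL$; by axiomatic appropriateness pick a constant $e$ with $\JUST e F \in CS$, and set $p = e$. If instead $F$ is itself a member of $CS$, so $F$ has the form $\JUST{e_n}{\cdots \JUST{e_1}A}$, then by axiomatic appropriateness there is a constant $e_{n+1}$ with $\JUST{e_{n+1}}F \in CS$, and again we take $p = e_{n+1}$. (Here one checks that the definition of $CS$ entails $\PROVES_{\JL+CS} G$ for every $G \in CS$, so these really are derivations whose last step we can internalize.)

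For the inductive step, suppose $F$ is obtained by MP from previously derived formulas $G \IMP F$ and $G$. By the inductive hypothesis there exist justification terms $s,t$ with $\PROVES \JUST s{(G \IMP F)}$ and $\PROVES \JUST t G$. The Application axiom gives
\[
\PROVES \JUST s{(G\IMP F)} \IMP (\JUST t G \IMP \JUST{[s\cdot t]}F),
\]
so two applications of MP yield $\PROVES \JUST{[s\cdot t]}F$, and we take $p = [s \cdot t]$.

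The main obstacle, such as it is, is bookkeeping around the definition of axiomatic appropriateness: one must ensure that the inductive step covers the case where the cited subderivations of $\JUST s{(G\IMP F)}$ and $\JUST t G$ use members of $CS$ that are themselves of the form $\JUST{e_n}{\cdots\JUST{e_1}A}$, so that the closure/appropriateness clauses supply the constant needed at each level. Once this is set up, no new ideas are required; the argument is exactly the proof of the Internalization Property given in \cite{Art2008}, and I would simply refer to it after laying out the induction skeleton above.
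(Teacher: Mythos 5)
The paper does not prove this theorem at all: it is quoted verbatim from Artemov's paper (Theorem 1 of \cite{Art2008}), and your induction on the length of the derivation, with the Application axiom handling MP and axiomatic appropriateness handling axioms and members of $CS$, is exactly the standard argument given there. Your observation that the paper's stated definition of axiomatic appropriateness must be read in its iterated form (a constant $e_{n+1}$ for every $e_n:\cdots:e_1:A\in CS$, not just for bare axioms) in order to internalize members of $CS$ themselves is correct and matches the definition in the cited source, so the proposal is sound.
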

Recall that $\DL$ with a nontrivial constant specification is inconsistent. {\it A fortiori},
$\DL$ with an axiomatically appropriate constant specification is inconsistent. 
This fact motivated our definition of negative constant specifications 
for $\DL$ and our generalization of negative constant specifications 
to $\DLP$. 
Accordingly, the definition of axiomatically appropriate constant 
specifications in $\DLP$ must be restricted to constant specifications 
$CS$ with positive justification constants applied to the axioms of $\DLP$. 
With this restriction,  $\DLP+CS$  satisfies the internalization property.

We also note that in justification logics with axiomatically appropriate constant 
specifications, the Evidence Pairing axiom is unnecessary. In such a system, there 
is a justification constant $a$ such that
\begin{equation}\label{EqJust}
\JUST a{\left(P\IMP \left(Q\IMP(P\AND Q)\right) \right)}
\end{equation}
holds. If $\JUST s P$ holds, then by the Application axiom and modus ponens we have
$\JUST {[a\cdot s]}{\left(B\IMP(P\AND Q)\right)}$. And if $\JUST t Q$ holds, application
and modus ponens gives us $\JUST{ [[a\cdot s]\cdot t]} (P\AND Q)$. However, \eqref{EqJust} is
inconsistent with $\DL$.

\section{Anthropogenic Global Warming Denial in $\DLP$}
Earth scientists often speak of the indicators produced by 
their biogeophysical  models \cite{VFML2000,VDGR2005,Sav2000}. 
The scientific modeling, remote sensing and economic literature is 
replete with this usage: ecologists speak of indicators of 
ecosystem stress;  meteorologists speak of atmospheric  indicators of climate 
change, economists speak of economic indicators and so on 
\cite{MalUvegesTurk2002,Reichler_2009,TangTank2003}. 
The scientific vocabulary of indicators of biogeophysical models suggests 
reading   $\JUST t E$ in $\JL$ as   ``$t$ indicates $E$.'' This reading is more
natural for our applications than ``term $t$ is justification for $E$.''

The logic $\DLP$ can model an agent who believes that climate 
models are as ``close to falsification [...] as mathematically 
possible,'' but who allows that $\mathrm{CO}_2$ is a greenhouse 
gas \cite{curry2012}. For this agent, this the only 
scientific statement pertaining to global warming that can 
be justified.  We think of the $\DL$ fragment of $\DLP$ as 
the logic of indicators of some climate model. The first 
assertion is expressed by the Denial axiom, which holds in 
the $\DL$ fragment of $\DLP$.  For example, if $E$ is some  
statement about the environment, such as ``global warming 
is accelerating,'' then $t:E$ is the statement that $t$ is 
indication of some climate model that $E$.  The second 
statement is positively justified in the $\LoP$ fragment of 
$\DLP$ as $s:C$, where $s$ is a positive justification term. 

\begin{prop}
In $\DLP$, if $s:C$ with $s$ positive, and if $t:E$ with $t$ 
negative, then there is no positive justification term $j$ 
such that $\JUST j {\left(C\IMP E\right)}$. 
\end{prop}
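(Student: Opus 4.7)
The plan is to argue by contradiction, exploiting the fact that the two relevant justification terms in the conclusion ($j$ and $s$) are both positive, so the Application axiom can be brought to bear, producing a positively justified $E$, which then conflicts with the negatively justified $E$ coming from $\JUST t E$.

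More concretely, I would suppose for contradiction that there is a positive justification term $j$ with $\JUST j {(C\IMP E)}$. Since $s$ is also positive, I can instantiate the Application axiom \eqref{itm:AxSgnApp} with $\sigma = +$ to obtain
\[
\JUST{j}{(C\IMP E)} \IMP \bigl(\JUST{s}{C} \IMP \JUST{[j\cdot s]}{E}\bigr),
\]
where $[j\cdot s]$ is again a positive justification term. Two applications of modus ponens, using $\JUST j {(C\IMP E)}$ and the hypothesis $\JUST s C$, yield $\JUST{[j\cdot s]}{E}$.

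Now I have both $\JUST{[j\cdot s]}{E}$ with a positive term and $\JUST{t}{E}$ with a negative term, which is precisely the situation ruled out by Proposition \ref{PropDisjoint}. Spelling this out once more: Positive Factivity \eqref{itm:AxPosFactivity} applied to $\JUST{[j\cdot s]}{E}$ gives $E$, while the Denial axiom \eqref{itm:AxSgnDenial} applied to $\JUST{t}{E}$ gives $\NOT E$, a contradiction.

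There is no real obstacle; the only point to check is that the sign-tracking in Application is honored, which is why it matters that $j$ is positive (so that $[j\cdot s]$ is positive and Positive Factivity applies). The proposition would fail if $j$ were allowed to be negative, since then Application would produce a negatively justified $E$ rather than a positively justified one, and no contradiction with $\JUST{t}{E}$ would arise.
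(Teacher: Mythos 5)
Your proof is correct and follows essentially the same route as the paper's: instantiate the Application axiom with $\sigma=+$ to derive $\JUST{[j\cdot s]}{E}$ with a positive term, then observe that together with the negatively justified $\JUST{t}{E}$ this contradicts Proposition \ref{PropDisjoint}. Your additional remarks on sign-tracking and on why the claim fails for negative $j$ are accurate but not needed beyond what the paper records.
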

\begin{proof}
Suppose there exists such a $j$. The Positive Application axiom 
asserts that for $j, s$ positive
justification terms, $[j\cdot s]$ is also positive, and
\begin{eqnarray}
\JUST j {\left(C\IMP E\right)} \IMP\left( \JUST s C \IMP \JUST {[j\cdot s]} E\right).
\end{eqnarray}
By MP twice, $\JUST {[j\cdot s]} E$. This together with 
the assumption $\JUST t E$ contradicts Proposition \ref{PropDisjoint}.
\end{proof}

The agent asserts that  the only thing scientists can justify 
is that $CO_2$ is a greenhouse gas. This is modeled  by 
adjoining to $\DLP$ the formulas $\JUST c C$ and $C$, 
where  $c$ is a positive justification constant. In effect, 
the agent accepts  $c:C$ and $C$ as axioms.
However, ``climate models are as close to falsification
[...] as is mathematically possible,'' which in $\DLP$ is 
represented as the Denial axiom in the $\DL$ fragment. In 
this context, in which the $\DL$ fragment of $\DLP$ models
the logic of indicators of some climate model, the Denial axiom asserts
that every indicator produced by that climate model is wrong. 
This is enough to rule out any scientific conclusion that 
might follow from the concession that $CO_2$ is a 
greenhouse gas.


One can say more about the epistemic state of the agent in
a larger system, $\DLJP$, in which the  middle summand
can represent justified beliefs that may be true or false. In this case
the agent believes that the denial axiom holds for climate models.
This is modeled 
in $\DL\oplus_{\JL}\oplus_{\JL}\LoP$ 
by adjoining $\JUST {e} {\left(\JUST {s^-} E\IMP \NOT E\right)}$
where $e$ is a ``neutral'' justification constant from the middle $\JL$ summand.

Also, the agent believes that he knows that the Denial axiom holds for climate models.
This is expressed by the formula 
$\JUST {e} {\JUST {t^+} {\left(\JUST {s^-} E\IMP \NOT E\right)}}$, 
where $t^+$ is a positive justification constant.

\section{Envatted brains under $\DLP$}
We augment the situation and logical apparatus of subsection \ref{SecBiVDL}  
by stipulating that the logic of justified belief of the computer generated 
sensory experiences of the envatted brains is definable within the $\DL$ 
fragment of the logic $\DLP$. Further, this logic can model  brains  that 
have positively justified beliefs  about their negatively justified beliefs
about their sense experience. An envatted brain  may correctly believe that 
it believes that its sensory experience indicates that it is an embodied human 
(and not a brain confined to a vat). In $\DLP$ this epistemic state is expressed 
as a formula of the form $\JUST {t^+}{\JUST {s^-} E}$, in which 
$t^+$ represents the positively justified belief that the negatively
justified belief represented by $s^-$ indicates $E$.  
From $\JUST {t^+}{\JUST {s^-} E}$ Positive Factivity yields ${\JUST {s^-} E}$, and 
from this Denial yields $\NOT E$. We do not stipulate that the Denial axiom is 
positively justified.     
The analogs of Proposition \ref{PropFinite} and the Blue Pill Theorem \ref{CorCompact}
hold in $\DLP$.

\section*{Acknowledgements} We thank Sergei Artemov for helpful comments.

\printbibliography

\end{document}